\numberwithin{equation}{section}
\newtheorem{theorem}{Theorem}[section]
\newtheorem{corollary}[theorem]{Corollary}
\newtheorem{proposition}[theorem]{Proposition}
\newtheorem{lemma}[theorem]{Lemma}
\newtheorem{definition}[theorem]{Definition}
\newtheorem{remark}[theorem]{Remark}
\newcommand{\la}{\langle}
\newcommand{\ra}{\rangle}
\newcommand{\im}{{\rm Im}}
\renewcommand{\L}{\mathcal{L}}
\newcommand{\F}{\mathcal{F}}
\newcommand{\J}{\mathcal{J}}
\newcommand{\R}{\mathbb{R}}
\newcommand{\C}{\mathbb{C}}
\newcommand{\Z}{\mathbb{Z}}
\newcommand\lie[1]{\mathfrak{#1}}
\newcommand{\Diff}{\mathfrak{Diff}}
\newcommand{\Con}{\mathfrak{Con}}
\newcommand{\con}{\lie{con}}
\newcommand{\fk}{\lie{k}}
\newcommand{\ft}{\lie{t}}
\def \inv{^{-1}}
\def \i{\sqrt{-1}}
\def \Sp{\mathrm{Sp}}
\begin{document}

\title{The cutting construction of toric symplectic and contact manifolds}

\author{Yushi Okitsu}
\address{Department of Mathematics, Tokyo Institute of Technology, 2-12-1, O-okayama, Meguro, Tokyo 152-8551, Japan}
\email{okitsu.y.aa@m.titech.ac.jp}

\thanks{}
\date{\today}

\begin{abstract} 
We introduce the cutting construction of possibly non-compact symplectic toric manifolds, in particular, toric symplectic cones that correspond to a weakly convex good cone. Since the symplectization of a toric contact manifold is a toric symplectic cone, we can also construct toric contact manifolds that correspond to a weakly convex good cone by cutting construction. (Note that these toric contact manifolds can not be constructed by Delzant construction.) We further prove there are no toric Sasakian structures on these contact manifolds. From this, contact toric manifolds of toric K-contact type are of toric Sasakian type.  
\end{abstract}

\maketitle
\markboth{The cutting construction of toric symplectic and contact manifolds}{YUSHI OKITSU}

\section{Introduction}
The history of the classification of symplectic toric manifolds began with the classification theorem of compact symplectic toric manifolds by T.Delzant \cite{D}. Thereafter, E.Lerman showed the classification theorem of compact connected contact toric (c.c.c.t. for short) manifolds \cite{L2}, and Y.Karshon and E.Lerman showed the classification theorem of non-compact symplectic toric manifolds \cite{KL}. The moment map on symplectic toric manifolds plays an important role in these classification theorems. Roughly speaking, there is a one-to-one correspondence between symplectic toric manifolds and certain convex sets except free action case. Hence it is important to construct symplectic toric manifolds by convex sets. After Delzant's work, such a construction is called the Delzant construction and the corresponding convex sets are called the Delzant polytopes (c.f. subsection \ref{Del}). This construction is only applicable to strongly convex sets. Hence we need the cutting construction to construct a symplectic toric manifold which corresponds to a weakly convex set.

In this paper, we define a new class of convex sets called unimodular sets (c.f. Definition \ref{uni}) and introduce the cutting construction (c.f. Theorem \ref{main}). As an application of cutting construction, we construct a c.c.c.t. manifold which corresponds to a weakly convex cone (c.f. Definition \ref{weak}). These manifolds were not constructed in E. Lerman's paper \cite{L2}, which has been pointed out in \cite{Y}. Moreover we can also construct an arbitrary  c.c.c.t. manifold which corresponds to a strongly convex cone by using the cutting construction. Therefore we are able to construct an arbitrary c.c.c.t. manifold with non-free toric action:

 \begin{theorem}\label{contact}
Let $\Delta := \{x \in (\R^n)^*\setminus\{0\}\;|\;\langle x,\eta_i\rangle \geqq 0, i=1,\cdots,N\}$ be a unimodular cone and $(S^*(T^n) \times \C^N ,\sum_{i=1}^n x_i d\theta_i + \sqrt{-1}\sum_{i=1} ^N{(z_id\overline{z}_i-\overline{z_i}dz_i)},T^n \times T^N,\Psi)$ be a toric contact manifold described as follows:\\
$T^n \times T^N$-action on $S^*(T^n) \times \C^N$ is given by
\begin{equation}
(s_1,\cdots, s_n,t_1,\cdots,t_N)\cdot(x,\theta,z_1,\cdots,z_N)=(x,\theta+\sum_{i=1}^n{s_ie_i}+\sum_{i=1} ^N{t_i\eta_i},e^{-\i t_1} z_1,\cdots, e^{-\i t_N} z_N)
\end{equation}
where $(x,\theta)$ is restriction of action-angle coordinates to the co-sphere bundle $S^*(T^n)\cong S^{n-1} \times \R^n/2\pi\Z^n$\\$\subset \R^n\times T^n$ of the torus $T^n$, and $e_i$ is the standard basis of $\R^n$.
Moreover the moment map of this action is described as
\begin{equation}
\Psi:S^*(T^n)\times \C^N \to (\R^n\oplus\R^N)^*, (x,\theta,z_1,\cdots,z_N) \mapsto x \oplus (\la x,\eta_1\ra-\|z_1\|^2,\cdots,\la x,\eta_N \ra-\|z_N\|^2).
\end{equation}
We denote the $(\R^N)^*$-part of $\Psi$ by $\mu$. If we take the quotient $M_\Delta := \mu\inv(0)/T^N$, then there are an induced contact structure $\xi_\Delta$, an induced $T^n$-action and an induced moment map $\Phi_\Delta$ on $M_\Delta$ such that the moment cone $C(\Psi_\Delta)$ coincides with $\Delta\cup\{0\}$ where $\Psi_\Delta$ is the universal moment map of $(M_\Delta,\xi_\Delta,T^n,\Phi_\Delta)$.
\end{theorem}

We further prove there do not exist toric K-contact  structures on these manifolds:

\begin{theorem}\label{main2}
Let $(M, \xi , T, \Phi)$ be a toric contact manifold which corresponds to a weakly convex cone. Then, a toric contact metric manifold $(M, \alpha,T,\Phi, g)$ is not a toric K-contact manifold for any $T^n$- invariant contact form $\alpha$, metric g, and almost CR structure $\Phi$.
\end{theorem}

This paper is organized as follows.
In section 2, we introduce cutting construction.
In section 3, we observe the diffeomorphism types of manifolds which correspond to weakly convex good cones.
In section 4, we apply cutting construction to contact manifolds and obtain c.c.c.t. manifolds. In particular, we get c.c.c.t. manifolds that correspond to weakly convex good cones, and further prove there do not exist toric K-contact structures on these manifolds.
In Appendix, we interpret the cutting construction in term of the K\"{a}hler geometry. Moreover we compute the canonical K\"{a}hler structure of cutting constructed manifolds and its symplectic potential explicitly.

\subsection*{Acknowledgement}
I would like to express my deepest gratitude to Prof. A. Futaki whose comments and suggestions were of inestimable value for my study.

\section{Preliminaries}
\subsection{Basic facts and basic notations.}
Let $(M,\omega)$ be a symplectic manifold with symplectic form $\omega$ and suppose $G$ is a Lie group with Lie algebra $\lie{g}$. Let $\Psi:G \rightarrow {\rm Sympl}(M,\omega)$ be a symplectic action of $G$ where ${\rm Sympl}(M,\omega)$ is the group of  symplectomorphisms that map $M$ to itself.
The action $\Psi$ is a Hamiltonian action if there exists a map $\mu:M \rightarrow \lie{g}^*$ satisfing:

(1) For each $X \in \lie{g}$, let $X_M$ be the vector field on M induced by the one-parameter subgroup $\{\exp(tX)\;|\;t\in\R\}\subset G$, then
\begin{equation}
\iota_{X_M}\omega = - d\langle \mu,X \rangle
\end{equation}
where $\iota$ is the interior product operator, and $\langle \cdot, \cdot \rangle$ is the algebraic pairing.

(2) $\mu$ is equivariant with respect to the given action $\Psi$ of $G$ on $M$ and the coadjoint action ${\rm ad}^*$ of $G$ on $\lie{g}^*$:
\begin{equation}
\mu \circ \Psi_g = {\rm ad}_g^* \circ \mu \;\;({}^\forall g \in G\,).
\end{equation}
The quadruple $(M,\omega,G,\mu)$ is then called a Hamiltonian $G$-space and $\mu$ is the moment map.

Suppose $G$ is commutative. Then since the coadjoint action is trivial, the above equivariance becomes invariance. For $g \in G$ and $p \in M$ we denote $\Psi_g (p)$ by $g \cdot p$.

Now we define a symplectic toric manifold to be a connected symplectic manifold $(M^{2n},\omega)$ equipped with an effective Hamiltonian action of a torus $T^n$ and with a choice of a corresponding moment map $\mu$.

In this paper we confine ourselves to the case where $G$ is a torus $T^n$, i.e. we only deal with toric cases. Hence we identify the Lie algebra of torus $G = T^n$ with $\R^n$ for the sake of convenience.
It is well known that a toric symplecic manifold corresponds to a certain convex polyhedral set.

Recall that a convex polyhedral set in $(\R^n)^* = \lie{g}^*$ is 
\begin{equation}
\Pi = \{x \in (\R^n)^*\;|\;\langle x,\eta_i\rangle \leqq \kappa_i, i=1,\cdots,N\}
\end{equation}
where $\eta_i \in \R^n$ (this is called outward conormal vector of the $i$-th facet) and $\kappa_i \in \R$.
\\~\\
We give the definition of a suitable class of convex polyhedral sets to describe the cutting construction.

\begin{definition}[unimodular set]\label{uni} Let $\Pi$ be as above.
A unimodular set $\Delta$ is  a relatively open subset of $\Pi$ satisfying:\\
$(1)$ if $\Delta$ has vertices then each vertex is simple, i.e. there are $n$ edges meeting at each vertex;\\
$(2)$ each outward conormal vector of a facet of $\Delta$ is in $\Z^n$ and primitive;\\
$(3)$ for any subset $I \subset \{1,\cdots,N\}$, the following holds: if $\Delta \cap F_I \neq \emptyset$ then $\{\eta_i\}_{i \in I}$ is a basis of integral lattice of a subtorus $K \subset T^n$, where $F_I :=\{x \in \Pi\;|\; \langle x,\eta_i \rangle = \kappa_i, i \in I\}$.
\end{definition}
\noindent
Moreover we define some convexities to state the main result.

\begin{definition}[strongly convex and weakly convex] \label{weak}
Let $\Pi$ be an $n$-dimensional convex polyhedral set with outward conormal vectors $\eta_1,\cdots,\eta_N$. Then $\Pi$ is strongly convex if and only if its conormal vectors span the whole space, i.e. $\R$-$span\{\eta_1,\cdots,\eta_N\} = (\R^n)^*$. Moreover a unimodular set $\Delta=U \cap \Pi$ is strongly convex if and only if $\Pi$ is strongly convex, where $U$ is an open subset of $(\R^n)^*$. A unimodular set is weakly convex if and only if it is not strongly convex. 
\end{definition}
\noindent
Note that the strongly or weakly convexity of a unimodular set $\Delta=U \cap \Pi$ depend on the ambient polyhedral set $\Pi$, hence we should consider a unimodular set $\Delta=U \cap \Pi$ coincides with another one $\Delta^\prime = U^\prime\cap\Pi^\prime$ if and only if  $\Delta$ coincides with $\Delta^\prime$ as a set, and $\Pi$ coincides with $\Pi^\prime$ as convex polyhedral set.


The source of the word \textit{unimodular} is the paper of Y.Karshon and E.Lerman$ ($\cite{KL}$)$.
If a unimodular set $\Pi=\Delta$ is a polytope, i.e. it is a \textit{compact} polyhedral set, then it is called a Delzant polytope,
 and the condition is referred to as simple, (2) to as rational, and (3) to as smooth. Notice that a Delzant polytope is necessarily strongly convex. 
 If a unimodular set is a cone without the conical point then it is a good cone without the conical point (cf. Definition \ref{good}). That is the moment image of a symplectic cone. Note that if a cone that corresponds to a symplectic cone is weakly convex, we define that the conical point is the origin.

For a Delzant polytope, we have the following well-known result by T.Delzant \cite{D}:

\begin{theorem}[Delzant]\label{Delzant}
Compact connected symplectic toric manifolds are classified by Delzant polytopes.
More precisely, there is a one-to-one correspondence between $n$-dimensional Delzant polytopes and $2n$-dimensional compact symplectic manifolds up to $T^n$-equivariant symplectomorphisms that preserve a moment map.
\end{theorem}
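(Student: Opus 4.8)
The plan is to establish the stated bijection in its two directions: \textbf{existence}, that every $n$-dimensional Delzant polytope $\Delta$ is the moment image of some $2n$-dimensional compact connected symplectic toric manifold, and \textbf{uniqueness}, that any two such manifolds sharing the moment image $\Delta$ are $T^n$-equivariantly symplectomorphic by a map $F$ intertwining the moment maps ($F^*\omega_2=\omega_1$ and $\mu_2\circ F=\mu_1$). Uniqueness is cleanest to phrase as a rigidity statement: every compact connected symplectic toric $(M,\omega,T^n,\mu)$ with $\mu(M)=\Delta$ is isomorphic, in this strong sense, to the canonical model $(M_\Delta,\omega_\Delta,T^n,\mu_\Delta)$ produced in the existence step, so the combinatorial data of $\Delta$ recovers the manifold on the nose.

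For existence I would run the Delzant construction by symplectic reduction. Writing $\Delta=\{x:\la x,\eta_i\ra\leqq\kappa_i\}$ with $N$ primitive integral conormals, define the surjection $\pi:\R^N\to\R^n$, $e_i\mapsto\eta_i$; the $\eta_i$ span $\R^n$ since a Delzant polytope is strongly convex. The smoothness hypothesis forces the induced lattice map $\Z^N\to\Z^n$ to be surjective (the conormals at any vertex already form a $\Z$-basis of $\Z^n$), so $K:=\ke(T^N\to T^n)$ is a genuine subtorus of dimension $N-n$. Equip $\C^N$ with its standard Hamiltonian $T^N$-action, moment map $\phi(z)=\tfrac12(|z_1|^2,\dots,|z_N|^2)-c$ for the constant vector $c$ read off from the $\kappa_i$, restrict to $K$ via $\iota^*\circ\phi$ with $\iota:\fk\hookrightarrow\R^N$, and form $M_\Delta:=(\iota^*\phi)\inv(0)/K$. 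The three verifications are that $0$ is a regular value of $\iota^*\phi$, that $K$ acts freely on the level set, and that $M_\Delta$ is compact. At a level-set point $z$ with vanishing-coordinate set $I=\{i:z_i=0\}$ one has $F_I\neq\emptyset$, so condition (3) makes $\{\eta_i\}_{i\in I}$ part of a lattice basis; consequently $\R^I\cap\fk=0$ (regularity) and $T^I\cap K=\{e\}$ (freeness), while simplicity guarantees $|I|\le n$. Compactness holds because the level set is closed and bounded in $\C^N$ as $\Delta$ is compact, and a direct computation identifies the moment image of the residual $T^n=T^N/K$ action with $\Delta$.

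For uniqueness I would first treat the open dense stratum. Over the interior $\mathring\Delta$ the torus acts freely and $\mu$ is a proper submersion with Lagrangian torus fibers, so the Liouville--Arnold action-angle theorem gives a $T^n$-equivariant symplectomorphism $\mu\inv(\mathring\Delta)\cong\mathring\Delta\times T^n$ carrying $\mu$ to the projection; the same holds for the model, identifying the two open dense parts compatibly with their moment maps. The task is then to extend this identification across the faces of $\Delta$. Here I would invoke the equivariant symplectic slice theorem (Marle--Guillemin--Sternberg normal form): over the relative interior of a face $F_I$ the stabilizer is the subtorus with Lie algebra spanned by $\{\eta_i\}_{i\in I}$, and the local symplectic model is determined up to equivariant symplectomorphism by this isotropy representation, which depends only on the combinatorics of $\Delta$. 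Matching these local models face by face and gluing produces the desired global $T^n$-equivariant, moment-map-preserving symplectomorphism. An alternative is to first extract an equivariant diffeomorphism intertwining the moment maps and then run an equivariant Moser isotopy on the two invariant forms, which share a moment map and are therefore cohomologous; this relocates rather than removes the boundary analysis.

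The main obstacle sits at the two interfaces of the argument. In existence it is the passage from the purely combinatorial Delzant conditions to the differential-geometric statements of regularity and freeness; obtaining a \emph{free} $K$-action on the zero level set is precisely what the unimodular/smoothness condition (3) is engineered to supply, and the argument collapses without it. In uniqueness it is the extension of the open-dense identification across the singular strata: the interior identification is essentially automatic, but controlling behavior along edges and especially at vertices, where several isotropy subgroups meet, demands the full equivariant normal form together with a careful compatible gluing, and this is the genuinely delicate part of the proof.
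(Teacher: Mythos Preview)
The paper does not give its own proof of this theorem: it is quoted as Delzant's result with a citation to \cite{D}, and only the existence direction is recalled afterwards (the Delzant construction in \S\ref{Del}). Your existence argument is exactly that construction---symplectic reduction of $\C^N$ by the kernel torus $K=\ke(T^N\to T^n)$, with the unimodularity condition (3) supplying freeness and regularity---so on that half you agree with what the paper records (up to the harmless sign/normalization conventions $e_i\mapsto -\eta_i$ versus $e_i\mapsto\eta_i$ and $\|z\|^2$ versus $\tfrac12\|z\|^2$).

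Your uniqueness sketch is additional content not present in the paper. It follows the standard route (global action--angle coordinates over $\mathring\Delta$, then the Marle--Guillemin--Sternberg local normal form along faces, then gluing or an equivariant Moser argument), and you correctly flag the genuine difficulty: the interior identification is soft, while the extension across the boundary strata---especially where several isotropy groups meet at a vertex---is where the real work lies. As written this is an honest outline rather than a proof; in particular, ``matching these local models face by face and gluing'' hides a coherence problem (the local normal-form charts are only unique up to equivariant symplectomorphism, and one must arrange the transition maps consistently), and the Moser alternative still requires producing the underlying equivariant diffeomorphism first. None of this is wrong, but since the paper itself defers entirely to \cite{D} for uniqueness, there is nothing in the text to compare your sketch against.
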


\begin{remark}
This theorem is generalized to a non-compact and disconnected case by Y.Karshon and E.Lerman \cite{KL}. Their result says there is a similar correspondence between unimodular sets and symplectic toric manifolds. 

In general, the moment image of a symplectic toric manifold is not a unimodular set and symplectic toric manidfolds can not be classified by its moment images $($see \cite{KL}$)$.  
\end{remark}

Recall the Denlzant construction which is in common use to prove the existence part in Theorem \ref{Delzant}.

\subsection{Delzant construction}\label{Del}
Suppose we are given a Delzant polytope $\Delta = \{x \in (\R^n)^*\;|\;\langle x,\eta_i\rangle \leqq \kappa_i, i=1,\cdots,N\}$.
Let $\{e_1,\cdots,e_N\}$ denote the standard basis of $\R^N$. Consider the map $\pi : \R^N \rightarrow \R^n = \lie{g}$ given by $\pi(\sum a_ie_i)=-\sum a_i \eta_i$. Since $\Delta$ is strongly  convex, $\pi$ is surjective. Moreover by smoothness, $\pi$ maps $\Z^N$ onto $\Z^n$. Hence $\pi$ induces the surjective map $\tilde{\pi}:T^N = \R^N/(2\pi \Z)^N \rightarrow \R^n/(2\pi \Z)^n = T^n$ and an injective dual map $\pi^*: (\R^n)^* \rightarrow (\R^N)^*$.
Let $K$ be the kernel of $\tilde{\pi}$, and $\lie{k}$ the Lie algebra of $K$. Now $K = \lie{k}/(2\pi\Z)^N$ holds by smoothness of $\Delta$ and so $K$ is connected. Then we have the following three exact sequences:
\begin{equation}
\begin{CD}
0@>>>\lie{k}@>i>>\R^N@>\pi>>\R^n@>>>0,
\end{CD}
\end{equation}

\begin{equation}\label{exa}
\begin{CD}
0@>>>K@>i>>T^N@>\tilde{\pi}>>T^n@>>>0,
\end{CD}
\end{equation}

\begin{equation}
\begin{CD}
0@>>>(\R^n)^*@>\pi^*>>(\R^N)^*@>i^*>>\lie{k}^*@>>>0,
\end{CD}
\end{equation}
where $i$ is the inclusion map.

Now consider $\C^N$ with symplectic form $\sqrt{-1}\sum dz_i \wedge d\overline{z_i}$, and standard Hamiltonian action of $T^N$ given for $t=(t_1,\cdots,t_N)\in (\R/2\pi \Z)^N$ and $z =(z_1,\cdots,z_N)$ by $t \cdot z = (e^{t_1\sqrt{-1}}z_1,\cdots, e^{t_N \sqrt{-1}}z_N)$. Then we can take the moment map of this action as follows:
\begin{equation*}
\sigma:\C^N \rightarrow (\R^N)^*, z \mapsto (\|z_1\|^2,\cdots,\|z_N\|^2) - (\kappa_1,\cdots,\kappa_N)
\end{equation*}
The action of $K$ on $\C^N$ is induced by the restriction of the action of $T^N$. Moreover the moment map of this $K$-action is $i^* \circ \sigma$.

Let $Z = (i^*\circ \sigma)\inv(0)$ be the zero-level set. In fact, $Z$ is compact and $K$ acts on $Z$ freely. Then we get a compact connected $2n$-dimensional symplectic manifold $M_\Delta :=Z/K$ by the symplectic reduction. Let $\omega_\Delta$ be the reduced symplectic form. Moreover $M_\Delta$ is toric and its moment image is $\Delta$. For more details on this see \cite{C}.

\begin{remark}
We can also apply Delzant construction to strongly convex good cones $($cf.\cite{L2}$)$. But we can not apply this construction to weakly convex unimodular sets, especially weakly convex good cones. To construct corresponding manifolds in weakly convex cases, we use symplectic cuts, which we explain below.
\end{remark}

\subsection{Symplectic cuts}
Next, we review symplectic cuts due to E. Lerman \cite{L}.

\begin{definition}[symplectic cuts]
Let ($M$,$\omega$) be a symplectic manifold with symplectic form $\omega$ and suppose that the circle $S^1$ acts on $M$ in the Hamiltonian way with moment map $\phi : M \rightarrow \R$.
Now we take a symplectic manifold $(M \times \C,\omega + \sqrt{-1} dz \wedge d\overline{z})$ with the diagonal $S^1$-action. The moment map of this $S^1$-action is $\mu = \phi + \|z\|^2$.
If $S^1$ acts freely on $\phi \inv (\kappa)$, then $\mu\inv(\kappa)/S^1$ is nonsingular and becomes a symplectic manifold. We call this the \textbf{symplectic cut of M with respect to the ray $(-\infty,\kappa]$} and denote this by $M_{cut}^\kappa$.
\end{definition}

Here we considered $S^1$ as $\R/2\pi \Z$ so that the moment map of standard $S^1$-action on $\C$ is $\|z\|^2$.

\begin{remark}\label{ab}
Since $\mu \inv (\kappa) = \{(m,z)\in M \times \C\;|\;\kappa - \phi(m) = \|z\|^2>0 \} \sqcup \{(m,0) \in M \times \C \;|\; \phi(m) = \kappa \}$ and both parts are $S^1$-invariant, $M_{cut}^\kappa = \mu \inv (\kappa) / S^1 $ is the disjoint union of the quotient of these two parts. If we put $M_\kappa :=\{(m,0) \in M \times \C \;|\; \phi(m) = \kappa \} / S^1$ and $M_0 ^\kappa :=\{(m,z)\in M \times \C\;|\;\kappa-\phi (m) = \|z\|^2 >0 \}/S^1$ then we have $M_{cut} ^\kappa = M_\kappa \sqcup M_0 ^\kappa$.

Now consider the map 

\begin{equation}\label{interior}
\sigma :\phi \inv ((-\infty,\kappa))\rightarrow M_0 ^\kappa, m \mapsto [m,\sqrt{\kappa - \phi(m)}]
\end{equation}
where $[m,z]$ is the equivalence class of $(m,z) \in \mu \inv (\kappa)$.\\
One can easily see that $\sigma$ is a symplectomorphism (see \cite{L1}, Theorem 2.5), so $M_0 ^\kappa$ can be identified with $\phi \inv (-\infty,\kappa)$, hence we can consider that $M_0^\kappa$ is an open symplectic submanifold of $M$ in that sense. Nevertheless, in general, $\sigma$ is not a K\"{a}hler isometry, that is, $M_0^\kappa$ is not embedded in $M$ as a K\"{a}hler submanifold by $\sigma$.
\end{remark}

\subsection{Construction with symplectic cuts}
In this subsection, we construct a symplectic toric manifold which corresponds to $\Delta$, where $\Delta$ is a unimodular set. Here we are not assuming $\Delta$ is compact, and thus it is not necessarily a Delzant polytope. For simplicity, we assume that $\Delta$ is a polyhedral set $\{x \in (\R^n)^*\;|\;\langle x,\eta_i\rangle \leqq \kappa_i, i=1,\cdots,N\}$.\\

\noindent
{\bf Step $\bf 0$}: Let $M$ be a cotangent bundle of $n$-torus. Give the standard coordinate $(x_1,\cdots,x_n, \theta_1,\cdots,\theta_n)$ on $M \cong \R^n \times T^n$ and let $\omega$ be the standard symplectic form $\sum dx_i \wedge d\theta_i$ on $M$. Moreover we consider the canonical $T^n$-action on $M$, that is,
\begin{equation*}
(t_1,\cdots,t_n)\cdot(x_1,\cdots,x_n, \theta_1,\cdots,\theta_n) = (x_1,\cdots,x_n, \theta_1+ t_1,\cdots,\theta_n + t_n)
\end{equation*}
and take a moment map with this action as follows:
\begin{equation*}
\Phi : M \rightarrow (\R^n)^*, (x,\theta)\mapsto x,
\end{equation*}
where $x = (x_1,\cdots,x_n),\theta = (\theta_1,\cdots,\theta _n)$. Indeed, $\im\Phi=(\R^n)^*$.\\

\noindent
{\bf Step $\bf1$}: We construct the symplectic toric manifold which corresponds to $\{x \in (\R^n)^*\;|\;\langle x,\eta_1\rangle \leqq \kappa_1\}$.\\
We consider $(M \times \C, \omega + \sqrt{-1} dz\wedge d\overline{z})$ and the diagonal $S^1$-action on this, where the $S^1$-action on $M$ is the action of $\R$-${\rm span} \{\eta_1\}/2\pi\Z \subset T^n$ and the $S^1$-action on $\C$ is the standard action of $S^1\subset \C$. We take a moment map $\mu_1$ of this action as follows:
\begin{equation}
\mu_1:M \times \C \rightarrow \R^*, ((x,\theta),z)\mapsto \langle x, \eta_1 \rangle + \|z\|^2.
\end{equation}
Since the $S^1$-action is free on $\la\Phi,\eta_1\ra \inv (\kappa_1)$, we can take a symplectic cut $M_1 := \mu_1\inv(\kappa_1)/S^1$. Now we extend the $T^n$-action on $M$ to $M\times \C$ as the trivial action on the second factor $\C$ and take the product $T^n \times S^1$-action on $M\times \C$. The moment map $\Psi$ of this product action is 
\begin{equation}
\Psi = \Phi \oplus \mu_1:M \times \C \rightarrow (\R^n)^* \oplus \R^*, (x,\theta,z)\mapsto(x,\la x,\eta_1 \ra + \|z\|^2).
\end{equation}
Since $\Phi$ is $S^1$-invariant, we have the moment map $\Phi_1$ with $T^n$-action on $M_1$ which is induced by $T^n$-action on $M \times \C$ and the following diagram commutes:
\begin{center}
$\;$
\begin{xy}
(0,12) *{\mu_1 \inv (\kappa_1)} = "A", (20,12) *{M\times \C} ="B", (45,12) *{(\R^n)^*\oplus \R^*} ="C",
(0,0) *{M_1} ="D", (20,0) *{(\R^n)^*} ="E",
\ar^{\rm inc} "A";"B"
\ar^{\Psi} "B";"C"
\ar_{\rm quot} "A";"D"
\ar^{\Phi\oplus 0} "B";"E"
\ar^{\rm proj} "C";"E"
\ar_{\exists \Phi_1} "D";"E"
\end{xy}
\end{center}

To compute $\Phi_1(M_1)$ we remark the following:
\begin{itemize}
\item $\Phi_1(M_1)=\Phi_1 \circ ({\rm quot})(\mu\inv(\kappa_1))=({\rm proj})\circ\Psi(\mu_1\inv(\kappa_1)) = (\Phi \oplus 0)(\mu_1\inv(\kappa_1))$,
\item $\mu_1\inv(\kappa_1)=\{(x,\theta,z)\;|\;\la x,\eta_1\ra + \|z\|^2 = \kappa_1 \} = \{(x,\theta,z)\;|\;\la x,\eta_1\ra \leqq \kappa_1 , \|z\|=\sqrt{\kappa_1-\la x,\eta_1\ra}\}$.
\end{itemize}
Combine these remarks, one can see that $\Phi_1(M_1)$ coincides with $\{x \in (\R^n)^*\;|\;\langle x,\eta_1\rangle \leqq \kappa_1\}$. Moreover we have a symplectic toric manifold $(M_1,\omega_1,\phi_1)$, where $\omega_1$ is a reduced symplectic form which is induced by $\omega$. One can easily see that a point in the inverse image $\Phi_1\inv(\{\langle x,\eta_1\rangle=\kappa_1\})$ is fixed by the subgroup  $S_1 := \R$-${\rm span}\{\eta_1\}/2\pi\Z$ in $T^n$. In below, we denote each subgroups of this form: $\R$-${\rm span}\{\eta_i\}/2\pi\Z$ in $T^n$ by $S_i$.\\

\noindent
{\bf Step $\bf 2,\cdots,N$}: We repeat the cutting as above for $S_2, \cdots S_N$, then we have the symplectic toric manifold $(M_N,\omega_N,\Phi_N)$ with moment image $\Delta$. This is the end of the cutting construction.\\

\begin{remark}
Let $(M_k,\omega_k,\mu_k)$ be a symplectic toric manifold (orbifold) resulting from the k-times cuttings.
Note that when we cut $(M_k,\omega_k,\mu_k)$ by the $S_{k+1}$-action, $(M_k)_{\kappa_{k+1}}$ is non-singular if and only if the $S_{k+1}$-action is free on $\la \Phi_k,\eta_{k+1}\ra \inv (\kappa_{k+1})$.

Hence to be convinced that we can get a manifold at the end of this construction, we must prove $S_{k+1}$-action is free on $\Phi_k \inv(\Delta \cap F_{k+1})$, where $F_{k+1}$ is the $(k+1)$-th facet of $\Pi (= \Delta)$.
Let $p$ be a point in $\Phi_k \inv (\Delta \cap F_{k+1})$ and we take a maximal subset $I\in\{J\subset\{1,\cdots,k\}\;|\;\Phi_k(p)\in F_J\}$, where $F_J$ is as given in Definition \ref{uni} and where maximality is given by the inclusion property. Then $p \in \Phi_k\inv(\Phi_k(p))= T^n\cdot p = \{\Phi_k (p)\} \times (T^n /\Pi_{i\in I} S_i) \subset M_k$ since $\Phi_k(p)\in F_I$. The $S_{k+1}$-action on this set is given as below:
\begin{equation}
[t\eta_{k+1}]\cdot[x_1,\cdots,x_n,\theta_1,\cdots,\theta_n] = [x_1,\cdots,x_n,\theta_1+t\eta^1_{k+1},\cdots, \theta_n+t\eta^n_{k+1}]
\end{equation}
where $t\in\R$ and $[\cdots]$ represent suitable equivalence classes.

Therefore $[t_{k+1}\eta_{k+1}]\cdot[x,\theta] = [x,\theta]$ holds if and only if $[t_{k+1}\eta_{k+1}]$ is an element of the subgroup $\Pi_{i\in I} S_i$. This indicates that there exists an element $[\sum_{i\in I }t_i\eta_i]$ of $\Pi_{i\in I} S_i$ such that $[\sum_{i\in I }t_i\eta_i] + [t_{k+1}\eta_{k+1}]=0$ in $T^n$, that is, $\sum_{i\in I \cup \{k+1\}}t_i\eta_i$ is a point in $2\pi\Z^n$ since $\Delta$ is unimodular. In particular $t_{k+1} \in 2\pi\Z$, then $[t_{k+1}\eta_{k+1}]=0$ in $T^n$. Therefore the $S_{k+1}$-action is free on $\phi_k\inv(\Delta \cap F_{k+1})$. The $S_{k+1}$-action may not be free on $\Phi_k\inv(\R^n\setminus\Delta)$ (but that is locally free, that is, these points have the non-trivial discrete stabilizer), and note that a point with the non-trivial discrete stabilizer yields a orbifold point by the group reduction.  Hence a manifold $M_{k+1}$ that appears in the middle of the construction is an orbifold in generic cases. Nevertheless $M_N$ has no singular point finaly since $\Delta$ is unimodular and so all orbifold points are cut off by the end of the last cutting, therefore that is a manifold.

\end{remark}

\begin{remark}
If $\Delta$ is not polyhedral, i.e. $\Delta = U \cap \{x \in (\R^n)^*\;|\;\langle x,\eta_i\rangle \leqq \kappa_i, i=1,\cdots,N\}$, where $U$ is open set of $(\R^n)^*$, then take $U\times T^n$ instead of $T^*T^n$ in Step $0$ and construct $M_\Delta$ as above.
\end{remark}

\noindent
We can reformulate $N$-times reduction as above to reduction at a time by using the reduction of product groups below:

\begin{theorem}[cutting construction]\label{main}
Let $\Delta := U \cap \{x \in (\R^n)^*\;|\;\langle x,\eta_i\rangle \leqq \kappa_i, i=1,\cdots,N\}$ be a unimodular set, where $U$ is open subset of $(\R^n)^*$. Give the standard coordinate $(x,\theta) = (x_1,\cdots, x_n,$ $\theta_1,\cdots,\theta_n)$ on $U\times T^n$. We consider a Hamiltonian $T^n \times T^N$-space $(U\times T^n \times \C^N ,\sum_{i} dx_i \wedge d\theta_i + \sqrt{-1}\sum_{i=1} ^N{dz_i\wedge d\overline{z}_i},T^n \times T^N,\Psi)$ as follows:\\
$T^n \times T^N$-action is given by
\begin{equation}
(s_1,\cdots, s_n,t_1,\cdots,t_N)\cdot(x,\theta,z_1,\cdots,z_N)=(x,\theta+\sum_{i=1}^n{s_ie_i}+\sum_{i=1} ^N{t_i\eta_i},e^{\i t_1} z_1,\cdots, e^{\i t_N} z_N),
\end{equation}
where $e_i$ is the standard basis of $\R^n$.
Moreover a moment map of this action is described as
\begin{equation}
\Psi:U\times T^n \times \C^N \to (\R^n\oplus\R^N)^*, (x,\theta,z_1,\cdots,z_N) \mapsto x \oplus (\la x,\eta_1\ra+\|z_1\|^2-\kappa_1,\cdots,\la x,\eta_N \ra+\|z_N\|^2-\kappa_N).
\end{equation}
We denote $(\R^N)^*$-part of $\Psi$ by $\mu$. If we take the quotient $M_\Delta := \mu\inv(0)/T^N$, then there are the induced symplectic form $\omega_\Delta$, the induced $T^n$-action and the induced moment map $\Phi_\Delta$ on $M_\Delta$. Moreover $\im \Phi_\Delta = \Delta$ holds.
\end{theorem}

\begin{remark}
This construction is an explicit version of collapse (\cite{KL}) construction for unimodular sets.
\end{remark}

\begin{remark}\label{abcdcba}
By the property of the symplectic cutting (cf.Remark \ref{ab}), we have the action-angle coordinates on $M_0:=\Phi_\Delta \inv(\mathring{\Delta})$ in the straightforward manner.
Let $(T^*T^n,\sum_{i} dx_i \wedge d\theta_i,T^n,\Phi)$ and $(M_\Delta,\omega_\Delta,T^n,\Phi_\Delta)$ be as above. Note that $M_0 = \{(x,\theta,z_1,\cdots,z_N)\in T^*T^n\times\C^N\;|\;\|z_i\|^2=\kappa_i-\langle x,\eta_i\rangle >0, i=1,\cdots,N\}/T^N$ and now consider the map
\begin{equation}
\sigma:\Phi\inv(\mathring{\Delta}) \to M_0,\; (x,\theta)\mapsto[x,\theta,\sqrt{\kappa_1-\langle x,\eta_1\rangle},\cdots,\sqrt{\kappa_N-\langle x,\eta_N\rangle}]
\end{equation}
where $[\cdot,\cdot]$ is the equivalence class of the $T^N$-quotient.\\
One can easily see that $\sigma$ is an equivariant symplectomorphism. Moreover the canonical coordinates $(x,\theta)$ of $T^*T^n$ is the action-angle coordinates. Therefore we can obtain the action-angle coordinates on $M_0$ by pulling back the canonical coordinates $(x,\theta)$ of $T^*T^n$ by $\sigma\inv$. Hence in what follows we take them as the canonical coordinates. We can identified $M_0$ with $\Phi\inv(\mathring{\Delta})$ as a symplectic manifold and we can also consider that $M_0$ is an open symplectic submanifold of $T^*T^n$ in that sense.
\end{remark}

\section{The diffeomorphism type and the fundamental group of $M_\Delta$}
This section deals with diffeomorphism types and homotopy groups of $M_\Delta$ which corresponds to weakly convex unimodular sets.
\begin{theorem}
Let X be a $2n$-dimensional toric variety with fan $(\Sigma,\Z^n)$. Then there exists a $2(n-k)$-dimensional toric variety $B$ such that $X= (\C^*)^k\times B$ if and only if there exists a $(n-k)$-dimensional sub lattice $N$ of $\Z^n$ such that $\Sigma \subset \R$-$\rm{span}(N)$.
\end{theorem}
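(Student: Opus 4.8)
The plan is to use the combinatorial description of toric varieties via fans and the fact that the product $(\C^*)^k \times B$ corresponds, on the level of fans, to a direct sum decomposition $(\R^n, \Z^n) = (\R^k, \Z^k) \oplus (\R^{n-k}, N)$ where the fan of $(\C^*)^k$ in the $\R^k$-factor is the trivial fan $\{0\}$. So the statement is really a translation exercise: a toric variety splits off a factor of $(\C^*)^k$ exactly when its fan is supported in a rational linear subspace of the right codimension, and is invariant under the complementary torus.

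First I would recall the standard fact (e.g. from Cox--Little--Schenck or Oda) that if $(\R^n, \Z^n) = L_1 \oplus L_2$ as a splitting of lattices, and $\Sigma = \Sigma_1 \times \Sigma_2$ is a product fan with $\Sigma_i$ living in $L_i \otimes \R$, then $X_\Sigma = X_{\Sigma_1} \times X_{\Sigma_2}$; and conversely a product of toric varieties has product fan with respect to a lattice splitting. Since $(\C^*)^k = X_{\{0\}}$ with respect to the lattice $\Z^k$, the "only if" direction is immediate: if $X \cong (\C^*)^k \times B$ as toric varieties then there is a lattice splitting $\Z^n = \Z^k \oplus N'$ with $\rm{rk}\,N' = n-k$ under which the fan of $X$ is $\{0\} \times \Sigma_B$, and this fan is contained in $\{0\} \otimes \R \oplus N' \otimes \R = \R\text{-}\rm{span}(N')$. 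Set $N = N'$.

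For the "if" direction, suppose $N \subset \Z^n$ is a rank-$(n-k)$ sublattice with $\Sigma \subset \R\text{-}\rm{span}(N)$. The one subtlety is that I want $N$ to be a \emph{saturated} (primitive) sublattice so that the quotient $\Z^n/N$ is free of rank $k$; I can arrange this by replacing $N$ with its saturation $\bar N = (\R\text{-}\rm{span}(N)) \cap \Z^n$, which still contains $\Sigma$ and still has rank $n-k$. Then $0 \to \bar N \to \Z^n \to \Z^n/\bar N \to 0$ splits, giving $\Z^n = \bar N \oplus \Z^k$ with $\Sigma \subset \bar N \otimes \R$. Under this splitting $\Sigma = \Sigma \times \{0\}$ as a fan in $(\bar N \otimes \R) \oplus \R^k$, hence $X = X_\Sigma = X_{\Sigma,\bar N} \times X_{\{0\},\Z^k} = B \times (\C^*)^k$ where $B := X_{\Sigma, \bar N}$ is a toric variety of dimension $2(n-k)$ with fan $(\Sigma, \bar N)$.

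The main obstacle — really the only place any care is needed — is the saturation step: one must check that enlarging $N$ to $\bar N$ does not break the hypothesis and that $\bar N$ is genuinely a direct summand of $\Z^n$ (this is where primitivity is used, and it is exactly the condition that makes $\Z^n/\bar N$ torsion-free so that the extension splits). Everything else is the functoriality of the $(\Sigma, N) \rightsquigarrow X_\Sigma$ dictionary under direct sums of lattice-fan pairs, together with $X_{\{0\}, \Z^k} = (\C^*)^k$, both of which are standard.
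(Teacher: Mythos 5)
Your argument is correct. Note, though, that the paper does not actually prove this statement: its ``proof'' is a one-line citation to the exercise on p.~22 of Fulton's \emph{Introduction to Toric Varieties}, so there is no argument in the paper to compare against. What you have written is the standard solution to that exercise: products of toric varieties correspond to product fans under a lattice splitting, $(\C^*)^k$ is the toric variety of the trivial fan in $\Z^k$, and the only point requiring care in the converse direction is to replace $N$ by its saturation $\bar N=(\R\text{-span}(N))\cap\Z^n$ so that $\Z^n/\bar N$ is torsion-free and the sequence $0\to\bar N\to\Z^n\to\Z^k\to 0$ splits; you identify and handle exactly that point, and the cones of $\Sigma$ are indeed rational with respect to $\bar N$ since their rational generators lie in $\Z^n\cap(\R\text{-span}(N))=\bar N$. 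The one reading you implicitly adopt --- that $X=(\C^*)^k\times B$ means an isomorphism of \emph{toric} varieties (equivariant with respect to the product torus), which is what makes the ``only if'' direction immediate from the product-fan dictionary --- is the intended one in both Fulton and this paper, so this is not a gap, merely worth stating explicitly.
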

\begin{proof}
See the book of Fulton$($\cite{F}$)$ p.22.
\end{proof}

We can translate the above theorem in the complex situation to the symplectic situation as a below:

\begin{theorem}
Let $\Delta =  \{x \in (\R^n)^*\;|\;\langle x,\eta_i\rangle \leqq \kappa_i, i=1,\cdots,N\}$ be a weakly convex unimodular set and suppose that $\rm{codim} (\R$-$\rm{span}\{\eta_1,\cdots,\eta_N\} )= k$.
Then we can choose $\{i_1,\cdots,i_{n-k}\} \subset \{1,\cdots,n\}$ such that $\eta_i^\prime := {}^t (\eta_i^{i_1},\cdots,\eta_i^{i_{n-k}})\in\R^{n-k}$ for each $i=1,\cdots,N$ and $\Delta^\prime := \{x \in (\R^{n-k})^*\;|\;\langle x,\eta_i^\prime\rangle \leqq \kappa_i, i=1,\cdots,N\}$ is strongly convex. Moreover $M_\Delta$ diffeomorphic to $(\C^*)^k\times M_{\Delta^\prime}$.
\end{theorem}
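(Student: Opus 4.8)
The plan is to pass to the complex side and invoke the Fulton-type theorem quoted just above, so that the real work splits into (a) choosing the coordinates $\{i_1,\cdots,i_{n-k}\}$ well and (b) identifying the two factors that theorem produces with $(\C^*)^k$ and $M_{\Delta'}$.

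\emph{Choosing coordinates.} Put $V:=\R$-$\mathrm{span}\{\eta_1,\cdots,\eta_N\}$, so $\dim V=n-k$, and set $\Lambda:=\Z^n\cap V$. As $V$ is a rational subspace (all $\eta_i\in\Z^n$), $\Lambda$ is a \emph{saturated} sublattice of $\Z^n$ of rank $n-k$; in particular each $\eta_i$, being primitive in $\Z^n$, is primitive in $\Lambda$, and a $\Z$-basis of $\Lambda$ completes to a $\Z$-basis of $\Z^n$. Hence some $(n-k)\times(n-k)$ minor of that basis matrix equals $\pm1$; taking $\{i_1,\cdots,i_{n-k}\}$ to be the indices of the rows of such a minor, the coordinate projection $\pi'\colon\R^n\to\R^{n-k}$, $v\mapsto{}^t(v^{i_1},\cdots,v^{i_{n-k}})$, restricts to a lattice isomorphism $\Lambda\xrightarrow{\sim}\Z^{n-k}$, hence $\R$-linearly to $V\cong\R^{n-k}$. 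Then $\eta_i':=\pi'(\eta_i)$ is primitive in $\Z^{n-k}$ and the $\eta_i'$ span $\R^{n-k}$, so $\Delta'$ is strongly convex.

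\emph{Comparing $\Delta$ and $\Delta'$.} Dually $(\pi')^*\colon(\R^{n-k})^*\hookrightarrow(\R^n)^*$ is the inclusion of a coordinate subspace and satisfies $\langle(\pi')^*y,\eta_i\rangle=\langle y,\eta_i'\rangle$, so $\Delta'=((\pi')^*)^{-1}(\Delta)$; and since every $\eta_i\in V$, the functionals $\langle\cdot,\eta_i\rangle$ are constant along the $k$-dimensional annihilator $B:=\mathrm{Ann}(V)$, while $(\R^n)^*=(\pi')^*(\R^{n-k})^*\oplus B$ by the choice above. Hence $\Delta\cong\Delta'\times(\R^k)^*$ with faces in bijection ($F_I\leftrightarrow F_I'$), and transporting Definition~\ref{uni}(3) along $\pi'|_\Lambda$ shows $\Delta'$ is again a unimodular set, so $M_{\Delta'}$ is well defined by Theorem~\ref{main}. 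In fan language: the fan $\Sigma(\Delta)$ of $\Delta$, whose cones are the $\mathrm{cone}\{\eta_i\}_{i\in I}$ over the faces $F_I$, lies inside $V=\R$-$\mathrm{span}(\Lambda)$, and $\pi'$ carries it to the fan $\Sigma(\Delta')$ in $\Z^{n-k}$.

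\emph{Conclusion, and the main obstacle.} By section 4 the canonical complex structure realizes $(M_\Delta,J_\Delta)$ as the smooth toric variety $X_{\Sigma(\Delta)}$ over $\Z^n$, and likewise $(M_{\Delta'},J_{\Delta'})\cong X_{\Sigma(\Delta')}$ over $\Z^{n-k}$. Since $\Sigma(\Delta)\subset\R$-$\mathrm{span}(\Lambda)$ with $\Lambda\subset\Z^n$ a rank-$(n-k)$ sublattice, the Fulton-type theorem above gives a biholomorphism $X_{\Sigma(\Delta)}\cong(\C^*)^k\times B$, where $B$ is the toric variety of $\Sigma(\Delta)$ regarded in the lattice $\Lambda$; via $\pi'$ this is $X_{\Sigma(\Delta')}\cong M_{\Delta'}$. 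Composing, $M_\Delta\cong(\C^*)^k\times M_{\Delta'}$ as complex manifolds, in particular diffeomorphically. The linear-algebra bookkeeping above is routine; the one substantive input is that the cutting construction of Theorem~\ref{main} produces exactly the toric variety $X_{\Sigma(\Delta)}$ — this is where section 4 is used, and granted it the cited theorem applies verbatim. (A section-4-free variant: split $T^*T^n\cong T^*T_V\times T^*T^J$ over $\Z$, which is possible precisely because $\Lambda$ is saturated, observe that both $\C^N$ and the reducing torus $T^N$ act only on the first factor, and commute the symplectic reduction past the product to get $M_\Delta\cong T^*T^k\times M_{\Delta'}$ with $T^*T^k\cong(\C^*)^k$ diffeomorphically; the only delicate point is again the $\Z$-splitting, i.e. the saturation of $\Lambda$.)
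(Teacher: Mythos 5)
Your overall route coincides with the paper's: the paper's entire proof is the one\-/line instruction to apply the Fulton-type theorem with the sublattice $\Z$-$\mathrm{span}\{\eta_1,\dots,\eta_N\}$ and $B=M_{\Delta'}$, and you are supplying the bookkeeping it omits (your reduction-commutes-with-products variant at the end is a genuinely more self-contained alternative worth keeping). However, one concrete step in that bookkeeping fails. From ``a $\Z$-basis of $\Lambda$ completes to a $\Z$-basis of $\Z^n$'' you infer that some $(n-k)\times(n-k)$ minor of the basis matrix equals $\pm1$. Completability only gives that the \emph{gcd} of the maximal minors is $1$, not that one of them is $\pm1$. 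Take $n=3$, $N=1$, $\eta_1={}^t(6,10,15)$, $\Delta=\{x\;|\;\langle x,\eta_1\rangle\leqq\kappa_1\}$: this is a weakly convex unimodular set with $k=2$, the sublattice $\Lambda=\Z\eta_1$ is saturated, yet no coordinate of $\eta_1$ is $\pm1$, so no coordinate projection restricts to a lattice isomorphism $\Lambda\to\Z$. Whichever $i_1$ you pick, $\eta_1'\in\{6,10,15\}$ is not primitive, so $\Delta'$ violates Definition \ref{uni}(2); the cutting construction applied to it yields the orbifold $\C/\Z_{\eta_1^{i_1}}$ rather than $\C$, while $M_\Delta$ itself is the smooth manifold $(\C^*)^2\times\C$. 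So the asserted diffeomorphism fails for the coordinate-projected $\Delta'$.

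To be fair, this defect sits in the theorem's statement as much as in your proof, and your extra care is precisely what exposes it: a coordinate projection can always be chosen so that $\Delta'$ is \emph{strongly convex} (a nonzero maximal minor suffices for that), but not so that $\Delta'$ is again unimodular. The repair, which your argument already essentially contains once the false minor claim is deleted, is to replace the coordinate projection by a projection adapted to the saturated sublattice $\Lambda=\Z^n\cap V$ via an arbitrary element of $\mathrm{GL}(n,\Z)$ --- i.e.\ define $\Delta'$ using the isomorphism $\Lambda\cong\Z^{n-k}$ coming from a completed basis rather than from a subset of the standard coordinates. With that modification both your Fulton argument and your splitting $T^*T^n\cong T^*T_\Lambda\times T^*T^k$ go through; the paper's own one-line proof never engages with the coordinate choice at all.
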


\begin{proof}
Set $B = M_{\Delta^\prime}$ and take sublatice $\Z$-$\rm{span}\{\eta_1,\cdots,\eta_N \}$ as $N$  in above theorem then it is clear by the sufficient part of that.
\end{proof}

\begin{corollary}
Let $\Delta =  \{x \in (\R^n)^*\;|\;\langle x,\eta_i\rangle \leqq \kappa_i, i=1,\cdots,N\}$ be a weakly convex unimodular  set with $\rm{codim} (\R$-$\rm{span}\{\eta_1,\cdots,\eta_N\} )= k$.
Then the following holds:
\[
\pi_{m}(M_\Delta)=
\begin{cases}
\pi_m(M_{\Delta^\prime}) & (m\geqq2)\\
\Z^k\times\pi_m(M_{\Delta^\prime}) & (m=1)\\
\{0\} & (m=0)
\end{cases}
\]
Hence if we want to know the homotopy groups of $M_\Delta$ then we have only to consider the homotopy groups of its strongly convex part $M_{\Delta^\prime}$. For example, the $1$-dimensional and $2$-dimensional homotopy groups of the strongly convex good cone were computed in E.Lerman \cite{LL}.
\end{corollary}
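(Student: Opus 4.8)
The plan is to read this off directly from the theorem just proved, which furnishes a diffeomorphism $M_\Delta \cong (\C^*)^k \times M_{\Delta^\prime}$. Since homotopy groups are invariant under diffeomorphism (indeed under homotopy equivalence), it suffices to compute $\pi_m\big((\C^*)^k \times M_{\Delta^\prime}\big)$. First I would replace $(\C^*)^k$ by something homotopically simpler: $\C^*$ deformation retracts onto the unit circle $S^1$, so $(\C^*)^k$ is homotopy equivalent to the $k$-torus $T^k$, whose homotopy groups are well known, namely $\pi_0(T^k)=\{0\}$, $\pi_1(T^k)\cong\Z^k$, and $\pi_m(T^k)=0$ for $m\geqq 2$ (it is covered by $\R^k$, which is contractible).

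Next I would invoke the elementary splitting of homotopy groups over a product, $\pi_m(X\times Y)\cong\pi_m(X)\times\pi_m(Y)$, valid for all $m\geqq 0$ because both factors here are path-connected so the choice of basepoint is irrelevant. Applying this with $X=(\C^*)^k\simeq T^k$ and $Y=M_{\Delta^\prime}$ yields, for $m\geqq 2$, $\pi_m(M_\Delta)\cong\pi_m(T^k)\times\pi_m(M_{\Delta^\prime})\cong\pi_m(M_{\Delta^\prime})$; for $m=1$, $\pi_1(M_\Delta)\cong\Z^k\times\pi_1(M_{\Delta^\prime})$; and for $m=0$, $\pi_0(M_\Delta)\cong\pi_0(T^k)\times\pi_0(M_{\Delta^\prime})=\{0\}$, using that $M_{\Delta^\prime}$ is connected (being a symplectic toric manifold, which is connected by our definition), or equivalently that $M_\Delta$ itself is connected. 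This is exactly the asserted case distinction, and the final sentence of the corollary — that computing the homotopy of $M_\Delta$ reduces to computing that of the strongly convex piece $M_{\Delta^\prime}$ — is then immediate.

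There is essentially no obstacle here; the only points deserving a word of care are (i) that $M_{\Delta^\prime}$ is genuinely well defined, i.e. that $\Delta^\prime$ is again a unimodular set, so that the cutting construction of Theorem \ref{main} applies to it — but this is already part of the content and proof of the preceding theorem; and (ii) that the product formula for $\pi_m$ requires path-connectedness of the factors, which we have. Thus the corollary is a formal consequence of the diffeomorphism $M_\Delta\cong(\C^*)^k\times M_{\Delta^\prime}$ together with the homotopy theory of tori and of products.
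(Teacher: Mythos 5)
Your argument is correct and is exactly the (implicit) argument the paper intends: the corollary is stated without proof as an immediate consequence of the diffeomorphism $M_\Delta\cong(\C^*)^k\times M_{\Delta^\prime}$, the homotopy equivalence $(\C^*)^k\simeq T^k$, and the product formula for homotopy groups. Nothing further is needed.
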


\section{application to contact toric manifolds}

Here we discuss the case when a unimodular set $\Delta$ is a polyhedral cone ( or equivalently, good cone ). In this case, $\Delta$ corresponds to a symplectic cone and it include the contact toric manifold $M_\Delta$ as a sub-manifold which is a pre-image of the unit sphere with the moment map image. Note that a symplectic cone is a symplectic manifold $(S,\omega)$ with a free proper action $\{\rho_t\}_{t\in\R}$ of the real line such that $\rho_t^* \omega=e^t\omega$ for all $t\in\R$. In this section, we denote a contact toric manifold which corresponds to a good cone by $M_\Delta$ and its symplectization by $\overline{M}_\Delta$.

According to the Appendix, there exists the canonical K\"{a}hler structure on the symplectization of a contact toric manifold $M_\Delta$, then it is reasonable to ask that whether there exists a toric Sasakian structure in the isomorphism class of a contact toric manifold $M_\Delta$.
\begin{remark}
It is clear that the above canonical K\"{a}hler metric is not cone metric (c.f. \cite{MSY}, p.44), hence it does not give $M_\Delta$ a Sasakian structure, but it is left that a possibility of $\overline{M_\Delta}$ have an another cone K\"{a}hler structure with respect to which $M_\Delta$ is Sasakian.
\end{remark}

To solve this problem we prove Theorem \ref{main2} in below. ( Then we can see that there exists a toric K-contact structure in the isomorphism class of a contact toric manifold if and only if its moment cone is strongly convex.)

Generally speaking, if a contact manifold is of toric Sasakian type then it is also of toric K-contact type. Hence the above theorem says $M_\Delta$ which corresponds to weakly convex good cone is not of toric Sasakian type.

\subsection{Preliminaries}
We recall some basic notions of the contact geometry. This subsection and the next one are based on \cite{BG,Bl,Bo}.

Let $M$ be an orientable $(2n-1)$-dimensional manifold and suppose $\xi$ is co-dimension one sub bundle of $TM$.
Then the pair $(M,\xi)$ is a contact manifold if and only if $\xi$ is a maximally non-integrable distribution, that is, $\L_X(\Gamma(\xi))\nsubseteq\Gamma(\xi)$ holds for any non-zero section $X\in\Gamma(\xi)$, where $\Gamma(\xi)$ is the space of sections of $\xi$. we call $\xi$ the contact distribution.

Let $\xi^\circ$ be the annihilator line bundle of $\xi$, and assume we can take a nowhere vanishing section $\alpha$ of $\xi^\circ\subset T^*M$, i.e., $\xi$ is co-orientable. Then 1-form $\alpha$ have the following properties:
\begin{enumerate}
\item $\alpha\wedge(d\alpha)^{n-1}\neq0$, $\xi=\ker\alpha$;
\item there exists a unique vector field $R_\alpha$ such that $\alpha(R_\alpha)=1$ and $\iota_{R_\alpha} d\alpha = 0$;
\item $R_\alpha$ generates a trivial line bundle $L_\alpha$ and the characteristic foliation $\F_\alpha$;
\item $L_\alpha$ provides a splitting of tangent bundle $TM=\xi\oplus L_\alpha$.
\end{enumerate}

We call $\alpha$ satisfying $(1)$ a {\bf contact form} representing a contact structure $\xi$ and a vector field $R_\alpha$ on $M$ which satisfies $(2)$ a {\bf Reeb vector field} of $\alpha$. Indeed, for any nowhere vanishing function $f$, $\alpha^\prime:=f\cdot\alpha$ is also a contact form of the same distribution $\xi$. On the other hand, if $\alpha$ and $\alpha^\prime$ are contact forms of the same distribution then there exists nowhere vanishing function $f$ such that $\alpha^\prime:=f\cdot\alpha$. Additionally, we determine the co-orientation of $M$ by choosing the positive part of $\xi^\circ \setminus\{0$-$section\}$ and we denote this $\xi^\circ _+$. It is clear that $\alpha,\alpha^\prime\in\Gamma(\xi^\circ_+)\iff {}^\exists f>0$ s.t. $\alpha^\prime=f\cdot\alpha$.

\subsection{The group of contactomorphisms}
In this section, we prepare some lemmata about the group of contactomorphisms for the proof of Theorem \ref{main2}
Let $\Diff(M)$ denote the group of diffeomorphisms of $M$. We endow $\Diff(M)$ with the compact-open $C^\infty$-topology in which case it becomes a Fr\'{e}chet Lie group. (See \cite{M,Ba,O,KM} for more details.)
\\

Let $(M,\xi)$ be a closed connected contact manifold. Then we define the group $\Con(M,\xi)$ of all {\bf contact diffeomorphisms} or {\bf contactomorphisms} by
\begin{equation*}
\Con(M,\xi)=\{\phi\in\Diff\;|\;\phi_*\xi\subset \xi\}.
\end{equation*}
If $\alpha$ is a contact form, then it is easy to see that
\begin{equation*}
\Con(M,\xi)=\{\phi\in\Diff(M)\;|\;\phi^*\alpha=f^\alpha_\phi\alpha, \;f^\alpha_\phi\in C^\infty(M)^*\},
\end{equation*}
where $C^\infty(M)^*$ denotes the subset of nowhere vanishing functions in $C^\infty(M)$.\\

\noindent
We also consider the subgroup $\Con(M,\alpha)$ of {\bf strict contact transformations}:
\begin{equation*}
\Con(M,\xi)^+=\{\phi\in\Diff(M)\;|\;\phi^*\alpha=\alpha\}.
\end{equation*}
This is the group of all contactomorphisms whith preserve a contact form $\alpha$.\\

We can consider the Lie algebra of those groups as infinitesimal transformations:
\begin{eqnarray*}
\con(M,\xi)&:=&\{X\in\Gamma(TM)\;|\;\L_X\alpha=f_X\alpha, \;f_X\in C^\infty(M)\};\\
\con(M,\alpha)&:=&\{X\in\Gamma(TM)\;|\;\L_X\alpha=0\}.
\end{eqnarray*}

A choice of contact forms $\alpha\in\Gamma(\xi^\circ_+)$ gives a well known Lie algebra isomorphism between $\con(M,\xi)$ and $C^\infty(M)$ given explicitly by $X\mapsto\alpha(X)$, where the Lie algebra structure on $C^\infty(M)$ given by the Poisson-Jacobi bracket. Then one can easily see that the sub-algebra $\con(M,\alpha)$ is isomorphic to the sub-algebra of $F$-invariant functions $C^\infty(M)^F$, where $F$ is a flow of the Reeb vector field $R_\alpha$ of $\alpha$. In particular, $R_\alpha$ is included in $\con(M,\alpha)$.\\

\noindent

\noindent
In fact, the following holds:

\begin{lemma}\label{caa}
The centralizer of a Reeb vector field $R_\alpha$ in $\con(M,D)$ is $\con(M,\alpha)$.
\end{lemma}
\begin{proof}
See \cite{Bo}.
\end{proof}


\subsection{Contact toric manifolds}
In this subsection, we introduce some basic notions of the contact toric geometry.

\begin{definition}
Let $(M,\xi)$ be a contact manifold, let $T$ be a torus, and suppose $T$ acts on $M$. Then a triplet $(M,\xi,T)$ is a contact toric manifold if and only if the torus $T$ is embedded in $\Con(M,\xi)$ and $\dim T$ is equal to $\frac{1}{2}(\dim M +1)$.
Now fixing a contact form $\alpha$. Then the $\alpha$-moment map $\Phi_\alpha$ is defined by $\Phi_\alpha:M\to \ft^*,$ $\langle\Phi_\alpha(x),X\rangle = \alpha(X)(x)$.
\end{definition}

We note that the $\alpha$-moment map depends on a particular choice of a contact form and not just on the contact structure. Hence we take more \textit{canonical} or \textit{universal} moment map as follows:
\begin{itemize}
\item Note that we can take a $T$-invariant contact form $\alpha\in\Gamma(\xi^\circ_+)$ by averaging $\alpha$ over the torus $T$ and hence we can assume $T\subset\Con(M,\alpha)$. Since the image of arbitrary $\alpha^\prime$-moment map do not in fact contain the origin, we can take a normalized contact form $\alpha := \frac{\alpha^\prime}{\|\Phi_{\alpha^\prime}\|}$ and then $\|\Phi_\alpha\|\equiv1$. We call this the {\bf canonical moment map}.
\item In order to treat the contact $1$-forms on an equal footing, we consider $\xi^\circ_+$ as a symplectization of $M$ and take a moment map $\Psi:\xi^\circ_+\to\ft^*$ given by $\langle\Psi(x,\alpha_x),X\rangle=\alpha(X)(x)$. We call this the {\bf universal moment map}. The universal moment map is a moment map which is defined in symplectic toric geometry's context of a symplectization of a contact toric manifold $(M,\xi,T)$.
 \end{itemize}

\begin{definition}
Let $(M,\xi,T,\Psi)$ be a contact toric manifold as above. We define the moment cone $C(\Psi)$ to be the set
\begin{equation*}
C(\Psi):={\rm Im}\Psi\cup\{0\}.
\end{equation*}
\end{definition}

Note that symplectic cone is non-compact toric symplectic manifold. Hence ${\rm Im} \Psi$ is a unimodular set. In particular we can denote $C(\Psi)=\{x \in \ft^*\;|\;\langle x,\eta_i\rangle \geqq 0, i=1,\cdots,N\}$ for some $\eta_i \in \mathfrak t$, $i =1,\cdots, N$.

\begin{definition}\label{good}
A polyhedral cone $C$ is {\bf good} if and only if $C\setminus\{0\}$ is a unimodular set.
\end{definition}

There is well known Lerman's classification theorem of c.c.c.t. (compact connected contact toric) manifolds \cite{L2}. From this theorem, there is a one-to-one correspondence between c.c.c.t. manifolds with a non-free toric action and good cones. 

A c.c.c.t. manifold $(M,\alpha,T,\Phi_\alpha)$ with the canonical contact form is embedded in the symplectic cone $(\xi^\circ_+,d(e^t\alpha))$ as a pre-image of the intersection of a moment cone $C(\Psi)$ and the unit sphere. In fact the symplectization commutes with the symplectic/contact cutting (see \cite{L1}). Hence we can construct every c.c.c.t. manifold with non-free toric action by the cutting construction similarly to the symplectic case. This proves Theorem \ref{contact}.

\begin{remark}
We can take a contact version of action-angle coordinates on some open dense subset:\\
Let $\Delta$ be a good cone and let $\Psi$ be the moment map as in Theorem \ref{contact}. Let $\Phi$ be a $(\R^n)^*$-part of $\Psi$, that is, a moment map of $(S^*(T^n),\sum{x_id\theta}, T^n)$ and let $\Phi_\Delta$ is an induced moment map as in Theorem \ref{contact}. Then, on a open dense subset $\Phi_\Delta\inv(\mathring{\Delta}\cap S^{n-1})$, since a contact structures on $\Phi_\Delta\inv(\mathring{\Delta}\cap S^{n-1})$ and on $\Phi\inv(\mathring{\Delta}\cap S^{n-1})$ are contactomorphic, we can denote a contact form $\alpha_\Delta=\sum_{i=1}^n x_i d\theta_i$ and a Reeb vector field ${R_{\alpha_\Delta}}=\sum_{i=1}^n{ x_i\frac{\partial}{\partial \theta_i}}$.
\end{remark}

\begin{remark}
One can easily see the compactness and connectedness of $M_\Delta$ in the above theorem by observing the pre-image of the moment map $\mu$ and $\Phi_\Delta$.
\end{remark}

\begin{definition}
Let $(M,\xi)$ be a contact manifold. We say that a torus $T\subset\Con(M,\xi)$ is of {\bf Reeb type} if there is a contact $1$-form $\alpha$ such that its Reeb vector field lies in the Lie algebra $\ft$ of $T$.
\end{definition}

\subsection{Contact toric metric structure}
In this subsection, we introduce the definition of ``of toric K-contact/Sasakian type". Let $(M,\xi)$ be a contact manifold. Let $(\alpha,\Phi,g)$ be a contact metric structure on $M$ with a contact form $\alpha$, an almost CR structure $\Phi$, and a metric $g$. We call the quadruplet $(M,\alpha,\Phi,g)$ a contact metric manifold. For more details of the definition of the contact metric structure, see \cite{Bl, Bo}.\\

\noindent
A contact metric manifold $(M,\alpha,\Phi,g)$ is said to be a K-contact manifold if the Reeb vector field $R_\alpha$ is a Killing vector field. Moreover,
a K-contact manifold $(M, \alpha, \Phi, g)$ said to be a Sasakian manifold if its symplectic cone $(\R\times M,d(e^t\alpha),J_c)$ is a K\"{a}hler cone (that is, $J_c$ is integrable, c.f. \cite{BG}), where $t$ is a coordinate of the radial direction of the cone and $J_c$ is an almost complex structure which is given by the following:
\begin{equation*}
J_c(X\oplus f \frac{\partial}{\partial t}):= (\Phi(X)-f\xi)\oplus\alpha(X)\frac{\partial}{\partial t},\; (X\in\Gamma(TM),f\in C^\infty(D^\circ_+)).
\end{equation*}

\begin{definition}
A contact manifold $(M,\xi)$ is of {\bf K-contact type} if it admits some K-contact structure, and a contact manifold $(M,\xi)$ is of {\bf Sasakian type} if it admit some Sasakian structure as well. Moreover an contact toric manifold $(M,\xi,T)$ is of {\bf toric K-contact type} if it admits some K-contact structure $(\alpha,\Phi,g)$ which consist of $T$-invariant form, almost CR-structure, and metric. We call the quintuplet $(M,\alpha, T, \Phi, g)$ toric K-contact manifold. Similarly, an contact toric manifold $(M,\xi,T)$ is of {\bf toric Sasakian type} if it admits some Sasakian structure $(\alpha,\Phi,g)$ which consist of $T$-invariant form, almost CR-structure, and metric. We call the quintuplet $(M,\alpha, T, \Phi, g)$ toric Sasakian manifold.
\end{definition}
%

\subsection{Strongly convex cones and manifolds of toric K-contact type}
First, we give some lemmata for proving Theorem \ref{main2} in below. 
\begin{lemma}
Let $(M,\xi)$ be a contact manifold and take a torus $T\subset\Con(M,\xi)$. Then the action of the torus $T$ is of Reeb type if and only if there exist an $X \in \mathfrak t$ and a contact form $\alpha \in \Gamma(\xi^o_+)$ such that  $\langle \Phi_\alpha, X\rangle > 0$, where $\Phi_\alpha$ is the $\alpha$-moment map.
\end{lemma}
\begin{proof}
If the torus action is of Reeb type, then we can take a contact form $\alpha^\prime\in\Gamma(\xi^\circ_+)$ such that its Reeb vector field $R_{\alpha^\prime}$ is contained $\ft$. Since both $\alpha$ and $\alpha^\prime$ are elements of $\Gamma(\xi^\circ_+)$, there is a positive function $f$ such that $\alpha=f\cdot\alpha^\prime$. Therefore $\langle \phi_\alpha,\xi^\prime\rangle=f\cdot\alpha^\prime(\xi^\prime)=f>0$. Hence only if part holds. If there exist $X\in\ft$ and $\alpha$ such that $\langle\Phi_\alpha,X\rangle>0$. Without loss of generality, we can assume $\alpha$ is $T$-invariant by averaging $\alpha$ on $T$. Now we set $\alpha^\prime:=\alpha/\alpha(X)\in\Gamma(\xi^\circ_+)$, then one can easily see that $\alpha^\prime(X)=1$ and $\iota_Xd\alpha^\prime=0$ holds. This indicates $R_{\alpha^\prime} = X$. Therefore torus $T$ is of Reeb type.
\end{proof}

\begin{corollary}\label{bb}
Let $(M,\xi)$ and $T$ be as above. Let $\Psi$ be a universal moment map of $(M,\xi,T)$. Then the action of the torus $T$ on $M$ is of Reeb type if and only if there exists $X\in\ft$ such that $\langle\Psi,X\rangle>0$.
\end{corollary}

\begin{lemma}
Let $(M,\xi,T,\Psi)$ be a c.c.c.t. manifold. Then the action of the torus $T$ on $M$ is of Reeb type if and only if its moment cone $C(\Psi)$ is a strongly convex cone.
\end{lemma}
\begin{proof}
If $C(\Psi)$ is weakly convex, then there exists $X\in\ft$ and $x\in{\rm Im}\Psi\setminus\{0\}$ such that $\langle x,X\rangle=0$. Hence we can not take $X\in\ft$ such that $\langle\Psi,X\rangle>0$. Therefore torus $T$ is of non-Reeb type. Suppose $C(\Psi)$ is strongly convex. Note that we can denote $C(\Psi)=\{x \in \ft^*|\langle x,\eta_i\rangle \geqq 0, i=1,\cdots,N\}$. Now take $X\in\sum_i^N a_i\eta_i$ for positive numbers $a_1,\cdots,a_N$. Then since $C(\Psi)$ is strongly convex, $\langle x,X\rangle=\sum a_i\langle x,\eta_i\rangle>0$ for any $x\in{\rm Im}\Psi$. The result now follows from Corollary \ref{bb}.
\end{proof}

\begin{corollary}\label{ee}
Let $(M,\xi,T,\Psi)$ be as above. There is a sub-torus $K\subset T$ of Reeb type if and only if $C(\Psi)$ is a strongly convex.
\end{corollary}
\begin{proof}
The moment map of $K$-action is given by $i^*\circ\Psi$ where $i:K\to T$ is inclusion. Since $i^*$ is a projection from $\ft$ to $\fk$, we get the result.
\end{proof}

\begin{lemma}\label{dd}
Let $(M,\xi)$ be a $(2n-1)$-dimensional contact manifold. Then the dimension of a maximal torus in $\Con(M,\xi)$ is at most $n$ where maximality is given by inclusion property.
\end{lemma}
\begin{proof}
Consider the lift of an action of $T\subset\Con(M,\xi)$ on $M$ to the $T$-action on the symplectization $(\xi^\circ_+,d(e^t\alpha))$ where $\alpha$ is some contact form and $t$ is the radial coordinate. Since the symplectic form $d(e^t\alpha)$ is exact, $T$ is a subset of ${\rm Ham}(\xi^\circ_+,d(e^t\alpha))$. The dimension of the torus acting in an effective Hamiltonian way on symplectic manifold $X^{2n}$ is at most $n$ (see \cite{C}, Theorem 27.3). Therefore $\dim T \leqq n$ holds.
\end{proof}

\begin{proposition}\label{zxc}
Let $(M,\alpha,T,\Phi,g)$ be a compact connected toric K-contact manifold with contact form $\alpha$, a CR structure $\Phi$ and a metric $g$. Then $C(\Psi)$ is a strongly convex.
\end{proposition}
\begin{proof}
Note that a contact form $\alpha$ and a metric $g$ are $T$-invariant. A contact metric manifold $(M,\alpha,\Phi,g)$ is K-contact if and only if $\L_\xi g = 0$, that is, a Reeb flow $F$ is contained by ${\rm Isom}_0(M,g)$, where ${\rm Isom}_0(M,g)$ is the identity component of the isometry group. The following result is well known: if $M$ is compact Riemanian manifold then the isometry group ${\rm Isom}_0(M,g)$ is a finite dimensional compact Lie group. Therefore $F$ is a subgroup of the compact finite dimensional group $G:={\rm Isom}_0(M,g)\cap\Con_0(M,\alpha)$. Moreover $T$ is included in $G$. Thus $F$ is contained in the centralizer of $T$ since $F$ is in the center of $G$ as follows from Lemma \ref{caa}. On the other hand, Lemma \ref{dd} yields that $T$ is maximal and in fact that the centralizer of a maximal torus is coincide to itself. Therefore $F$ is contained in torus $T$, that is, ${\rm Lie}(F) \subset \ft$. Now consider the closure $\overline{F}:=$ the closure of $F$, then this is a sub-torus in $T$ of Reeb type. The result now follows from Corollary \ref{ee}.
\end{proof}
\noindent
\textit{Proof of Theorem \ref{main2}} By Proposition \ref{zxc} we have only to prove its converse. If the moment cone of $(M,\xi,T,\Psi)$ is strongly convex, then we get a toric Sasakian structure on $(M,\xi,T,\Psi)$ by Delzant construction as \cite{L2}. Hence $(M,\xi)$ is of toric Sasakian type, in particular of toric K-contact type. This completes the proof of Theorem \ref{main2}.\\
As a result, strongly convex good cones correspond to of toric K-contact type, in particular of toric Sasakian type and weakly convex cones correspond to of toric non-Sasakian type contact manifold.

\begin{remark}
In toric cases, it is true that toric K-contact type implies toric Sasakian type, since Theorem \ref{main2} holds, and we can construct a toric Sasakian structure which is induced by Delzant construction, but it is not true in generic cases.
\end{remark}




Moreover the symplectic cones that correspond to a weakly convex cone do not have a K\"{a}hler cone structure, but they have the canonical K\"{a}hler structure that is determined by the cutting construction and hence c.c.c.t.manifolds of toric non-Sasakian type have the canonical almost contact metric structure as follows:

Let $(M,\xi,T,\Psi)$ be a c.c.c.t. manifold with the canonical contact form $\alpha$ and let $(J,d(e^t\alpha),h)$ be the canonical K\"{a}hler structure  on the symplectization $(\R\times M,d(e^t\alpha),T,\Psi)$ which is determined by the cutting construction. Now $M$ is embedded in its symplectic cone as the pre-image of the intersection of the unit sphere and a moment cone under the moment map $\Psi$. Then we get an almost contact metric structure $(\Phi,\xi,\alpha,g)$ which satisfies:

\begin{equation*}
J\iota_*X=\iota_*\Phi X + \alpha(X)\xi,\;g:=\iota^*h,
\end{equation*}
where $\iota:M\to D^\circ_+$ is the inclusion map and $X\in\Gamma(TM)$.

\begin{remark}
Generally speaking, every $C^\infty$ orientable hypersurface of an almost complex manifold has an almost contact structure and if its ambient space is an almost Hermitian manifold then it has an almost contact metric structure $($see \cite{T} $)$. 
\end{remark}


\setcounter{section}{5}
\setcounter{theorem}{0}

\section*{Appendix}
In this section, we give a expression of the canonical K\"{a}hler structure on cutting constructed symplectic toric manifolds as follows:
\begin{theorem}\label{spsp}
Let $\Delta = \{x \in (\R^n)^*\;|\;\langle x,\eta_i\rangle \leqq \kappa_i, i=1,\cdots,N\}$ be a unimodular set, and suppose $(M_\Delta,\omega_\Delta,T^n,\Phi_\Delta)$ is a symplectic toric manifold which is constructed by the cutting construction. Then there is a canonical K\"{a}hler structure $(\omega_\Delta,J_\Delta,g_\Delta)$ on $M_\Delta$ which is given by the symplectic potential;
\begin{equation*}
\Sp(x)=\frac{1}{2}\|x\|^2+\frac{1}{2}\sum{l_i(x)\log l_i(x) - \frac{1}{2}l_\infty(x)}.
\end{equation*}
\end{theorem}

First, the cotangent bundle of $n$-dimensional torus $T^*T^n$ have the natural K\"{a}hler structure. More explicitly, if we give coordinates $(T^*T^n,x_1,\cdots ,x_n,\theta _1,\cdots ,\theta _n )$, then we can take a symplectic form $\omega_{ST} :=\sum_idx_i\wedge d\theta_i$ and  a compatible almost complex structure $J_{ST}$ as follows:

\begin{equation}
J_{ST}:T^* M \rightarrow T^* M,\; \frac{\partial}{\partial x_i} \mapsto \frac{\partial}{\partial \theta _i}, \frac{\partial}{\partial \theta _i} \mapsto -\frac{\partial}{\partial x_i},
\end{equation}
where $M$ is $T^* T^n$.

Hence $M_\Delta$ has the canonical K\"{a}hler structure that is induced by the standard K\"{a}hler structure of $T^*T^n$ and the cutting construction. Now we compute the induced K\"{a}hler structure of $M_\Delta$ by applying the following proposition.

\begin{proposition}\label{mainp}
Let $\Delta = \{x\in(\R^n)^*\;|\;\langle x,\eta_i \rangle \leqq \kappa, i = 1,\cdots,N \}$ be a unimodular set and let $(M_\Delta,\omega_\Delta,T^n,\Phi_\Delta)$ be the Hamiltonian space as in Theorem \ref{main}. Then there exists a biholomorphic map $g$ between $(M_\Delta \supset )M_0 =\Phi_\Delta \inv(\mathring{\Delta})$ with the induced complex structure and $T^*T^n$ with the standard complex structure which can be expressed in terms of the action angle coordinates, and such a biholomorphism $g$ is given by the following formula

\begin{equation}
g:M_0 \rightarrow T^*T^n,\; (x,\theta)\mapsto(x-\frac{1}{2}\sum_{i=1}^{N}[\log(\kappa _i - \langle x,\eta _i \rangle)]\eta _i,\theta)
\end{equation}
where $x=(x_1,\cdots,x_n)$, $\theta = (\theta_1,\cdots,\theta_n)$ and since the symplectic form induces the natural isomorphism $\lie{t}^n \cong \R^n \cong {\lie{t}^n}^*$ we may consider $\eta_i$ and $x$ in the same space.\end{proposition}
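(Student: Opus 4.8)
The plan is to present $M_\Delta$ as a \emph{K\"ahler} quotient and then to transport everything to holomorphic coordinates on $M_0$. First I would observe that $T^*T^n\times\C^N$, equipped with $J_{ST}$ on the first factor and the flat complex structure on $\C^N$, is K\"ahler, and that the $T^N$-action of Theorem \ref{main} is holomorphic and isometric: the translations $\theta\mapsto\theta+\sum_i t_i\eta_i$ preserve $J_{ST}$, $\omega$ and the flat metric on $T^*T^n$ (these are all $\theta$-translation invariant), while the rotations $z_i\mapsto e^{\i t_i}z_i$ preserve the flat K\"ahler structure of $\C^N$. Hence $M_\Delta=\mu\inv(0)/T^N$ inherits a canonical K\"ahler structure $(\omega_\Delta,J_\Delta,g_\Delta)$, with $J_\Delta$ the reduced complex structure; this is the ``induced complex structure'' in the statement.

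Next I would identify the relevant complex manifolds. Via $J_{ST}$ the functions $\zeta_j:=\exp(x_j+\i\theta_j)$ are holomorphic coordinates identifying $(T^*T^n,J_{ST})$ with $(\C^*)^n$, and in these coordinates the complexified $T^N$-action becomes the $(\C^*)^N$-action $\lambda\cdot(\zeta,z)=\big((\zeta_j\prod_{i=1}^N\lambda_i^{\eta_i^j})_j,(\lambda_i z_i)_i\big)$. By Remark \ref{abcdcba}, $M_0=\Phi_\Delta\inv(\mathring{\Delta})$ is precisely the image in $M_\Delta$ of $\mu\inv(0)\cap\{z_i\neq0\ \forall i\}=\{\|z_i\|^2=\kappa_i-\la x,\eta_i\ra>0\}$, on which the $(\C^*)^N$-action is \emph{free}. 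Since holomorphic functions on a K\"ahler quotient by $T^N$ pull back to the $(\C^*)^N$-invariant holomorphic functions on (a saturated neighbourhood of) the zero level set, the globally $(\C^*)^N$-invariant holomorphic functions
\begin{equation*}
W_j:=\zeta_j\prod_{i=1}^N z_i^{-\eta_i^j}\qquad(j=1,\dots,n)
\end{equation*}
descend to holomorphic functions on $M_0$; the fibre of $(W_1,\dots,W_n)$ over any point of $(\C^*)^n$ is a single $(\C^*)^N$-orbit, which provides the holomorphic inverse and exhibits $(M_0,J_\Delta)$ as biholomorphic to $(\C^*)^n$. Composing with the identification $(T^*T^n,J_{ST})\cong(\C^*)^n$ of the \emph{target} then gives a biholomorphism $g:M_0\to T^*T^n$ which, in particular, is onto all of $T^*T^n$.

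It then remains to write $g$ in action-angle coordinates. By Remark \ref{abcdcba} the equivariant symplectomorphism $\sigma:\Phi\inv(\mathring{\Delta})\to M_0$ sends $(x,\theta)$ to the class of $(x,\theta,\sqrt{l_1(x)},\dots,\sqrt{l_N(x)})$, where $l_i(x)=\kappa_i-\la x,\eta_i\ra$, so
\begin{equation*}
W_j(\sigma(x,\theta))=\exp(x_j+\i\theta_j)\prod_{i=1}^N l_i(x)^{-\eta_i^j/2}=\exp\Big(x_j-\tfrac12\sum_{i=1}^N\eta_i^j\log l_i(x)+\i\theta_j\Big).
\end{equation*}
Matching this against the target coordinate $\exp(y_j+\i\vartheta_j)$ on $T^*T^n$ yields $\vartheta_j=\theta_j$ and $y_j=x_j-\tfrac12\sum_i\eta_i^j\log l_i(x)$, i.e. exactly $g(x,\theta)=\big(x-\tfrac12\sum_{i=1}^N[\log(\kappa_i-\la x,\eta_i\ra)]\eta_i,\ \theta\big)$, the claimed formula. (As a check, $g=(\nabla\Sp,\mathrm{id})$ with $\Sp(x)=\tfrac12\|x\|^2+\tfrac12\sum_i l_i\log l_i-\tfrac12 l_\infty$, which is how this proposition feeds into Theorem \ref{spsp}: the $\tfrac12\|x\|^2$ is the symplectic potential of the ambient $T^*T^n$, and the remaining terms are the contribution of the $N$ symplectic cuts.)

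I expect the main obstacle to be the middle step — making precise that the reduced complex structure $J_\Delta$ on the open free locus $M_0$ is the one carried by the invariants $W_j$, i.e. the Kempf--Ness-type comparison of the K\"ahler quotient with the complex-torus quotient, together with checking that every $(\C^*)^N$-orbit meeting $\{z_i\neq0\}$ meets $\mu\inv(0)$ so that the quotient is all of $(\C^*)^n$. Everything else (holomorphicity and isometry of the action, $(\C^*)^N$-invariance of $W_j$, the bookkeeping of the exponents $\eta_i^j$ and signs, and evaluating on $\sigma$) is routine.
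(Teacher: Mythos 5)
Your route is sound in outline but genuinely different from the paper's in its mechanics. You compare the K\"ahler quotient with the holomorphic $(\C^*)^N$-quotient by exhibiting explicit invariant Laurent monomials $W_j=\zeta_j\prod_i z_i^{-\eta_i^j}$ and reading off $g$ by evaluating them on the section $\sigma(x,\theta)=[x,\theta,\sqrt{l_1(x)},\dots,\sqrt{l_N(x)}]$; your exponent bookkeeping is correct and reproduces the stated formula. The paper instead constructs the shift automorphism $f(p,z)=(z\cdot p,z)$ of $M\times(\C^*)^N$, observes that $f$ pulls the moment map $\mu$ back to $\overline{\mu}(p,z)=\phi(z\cdot p)+(\|z\|^2)-(\kappa)$ and hence carries $\overline{\mu}\inv(0)$ to $\mu\inv(0)$, so that it induces a K\"ahler isometry $h$ of the two reductions; identifying the $\overline{\mu}$-quotient with (an open subset of) $T^*T^n$ via the GIT slice and matching points yields $h=g\inv$. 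Both arguments ultimately invoke the same symplectic-versus-holomorphic quotient comparison that you flag as the ``main obstacle,'' so neither is more elementary there; your version is more computational, the paper's more structural, and the paper's has the advantage that the complex structure is transported by an explicit biholomorphism upstairs rather than by an appeal to invariant functions generating the quotient's structure sheaf.

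The one point you defer that is a genuine, load-bearing part of the proposition is surjectivity of $g$ onto all of $T^*T^n$, equivalently that every fibre of $(W_1,\dots,W_n)$ over $(\C^*)^n$ meets $\mu\inv(0)\cap\{z_i\neq0\}$. Reducing by the $T^N$-action to $z_i=\sqrt{l_i(x)}>0$, this is exactly the statement that $\tilde{g}(x)=x-\frac12\sum_i[\log l_i(x)]\eta_i$ maps $\mathring{\Delta}$ \emph{onto} $(\R^n)^*$, which is not automatic (a priori the image is only an open subset). The paper closes this by noting that $\tilde{g}$ is a diffeomorphism onto its image, so the image is open, and that any boundary point of the image would have to come from $\partial\Delta$, where the $\log l_i$ terms force $\tilde{g}$ to diverge; hence the image has empty boundary and equals $(\R^n)^*$. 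You should supply this (or an equivalent properness argument for $\tilde{g}$) to complete your proof; the Kempf--Ness comparison you cite as the other obstacle is standard for free actions on the open stratum and is the same input the paper uses implicitly.
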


\begin{proof}
For simplicity, we set $(M,\omega)=(T^*T^n,\sum_{i} dx_i \wedge d\theta_i)$. We first note the following.\\
(1)  Let $(x,\theta)$ be a point of $M_0$ and suppose $t=(t_1,\cdots,t_N) \in \R^N$ satisfies 
\begin{equation*}
(\kappa _1,\cdots,\kappa_N)-(e^{2t_1},\cdots,e^{2t_N}) = \phi(x,\theta) = (\langle x,\eta _1\rangle,\cdots,\langle x,\eta _N \rangle),
\end{equation*}
then $\exp(\sum_{i=1}^{N}t_i \nabla\phi_i)((y,\xi))=(x,\theta)$ if and only if $(y,\xi)=g(x,\theta)$ where $\phi _i$ is the $i$-th component of $\phi$, that is $\langle x,\eta_i \rangle$.

We prove $g$ is a biholomorphism from $M_0$ to its image. To see this, we consider the following biholomorphic map

\begin{equation}
f:M \times (\C^*)^N \rightarrow M \times(\C^*)^N,\; (p,z)\mapsto(z \cdot p, z)
\end{equation}
where $z=(z_1,\cdots,z_N)=(e^{r_1+\sqrt{-1}s_1},\cdots,e^{r_N+\sqrt{-1}s_N})$, $p=(x,\theta)$, and $z\cdot p :=(x+\Sigma_i r_i\eta_i,\theta + \Sigma_i s_i \eta_i)$.

Give a diagonal action of $(\C^*)^N$ on $M \times (\C^*)^N$. Now the pullback by $f$ of the K\"{a}hler form 
$\omega + \sqrt{-1} \sum_{i=1} ^{N} dz_i \wedge d\overline{z_i}$ is invariant under the diagonal action of that $T ^N( \subset (\C^*)^N)$. Moreover the diagonal $T^N$-action is the Hamiltonian action with a moment map $\overline{\mu}(p,z) = \phi(z \cdot p) + (\| z_1 \|^2,\cdots,\| z_N \|^2) - (\kappa_1,\cdots,\kappa_N)$. In particular, $f$ maps the level set $\overline{\mu}\inv(0)$ to $\mu\inv(0)$, where $\mu$ is the same map as in Theorem \ref{main}
and induce the K\"{a}hler isometry

\begin{equation}
h:\overline{\mu}\inv(0)/T^N \rightarrow \mu\inv(0)/T^N.
\end{equation}

To describe this isometry more explicitly, take the following subsets
\begin{equation}\label{c}
\{[(x,\theta),(e^{t_1},\cdots,e^{t_N})]\;|\;\langle x + \sum_{k=1}^{N} t_k \eta_k ,\eta_i \rangle + e^{2t_i} = \kappa _i, i=1,\cdots,N\} (\subset \overline{\mu}\inv(0)/T^N),
\end{equation}

\begin{equation}\label{d}
M_0 =\{[(x,\theta),(e^{t_1},\cdots,e^{t_N})]\;|\;\langle x,\eta_i \rangle + e^{2t_i} = \kappa _i, i=1,\cdots,N\}(\subset \mu\inv(0)/T^N)
\end{equation}
where $[\cdot,\cdot]$ represents suitable equivalence classes.

Suppose $h$ maps a point $[(y,\xi),(e^{s_1},\cdots,e^{s_N})]$ in (\ref{c}) to a point $[(x,\theta),(e^{t_1},\cdots,e^{t_N})]$ in (\ref{d}), then
\begin{equation*}
s_i = t_i,\; \theta = \xi,\; x = y + \sum_{k=1}^{N}t_k\eta_k,\; and \;e^{2t_i} = \kappa _i - \langle x,\eta \rangle \; (\mathrm{for}\; i=1,\cdots,N).
\end{equation*}
Therefore $(y,\xi) = g(x,\theta)$ holds since (1) and $\exp(\sum_{i=1}^{N}t_i \nabla\phi_i)((y,\xi))=(x,\theta)$ iff $x = y + \sum_{k=1}^{N}t_k\eta_k$ and $\theta = \xi$. That indicates $h = g\inv$. Therefore $g:M_0 \rightarrow {\rm Im} (g)$ is a biholomorphim between (\ref{c}) and (\ref{d}) with the induced complex structures. Note that open set (\ref{c}) have the same complex structure as $M$ by GIT-quotient. 

Next we show $g$ is surjective. To see this, we take the following map:

\begin{equation}
\tilde{g}:\mathring{\Delta} \rightarrow (\R^n)^*, x\mapsto x-\frac{1}{2}\sum_{i=1}^{N}\log(\kappa_i -\langle x,\eta_i \rangle)\eta_i
\end{equation}
where $\mathring{\Delta}$ denotes the interior of $\Delta$.

Then, the following diagram is commutative:
\begin{equation*}
\begin{CD}
M_0@>g>>M\\
@VV{\Phi}V @VV{\Phi}V\\
\mathring{\Delta}@>{\tilde{g}}>>({\R^n)^*}\\
\end{CD}
\end{equation*}
where, we must consider $M_0 = \Phi \inv (\mathring{\Delta}) \subset M$.\\
It is clear that $g$ is surjective iff $\tilde{g}$ is surjective.
Hence the remains of this proof is to prove $\im (\tilde{g}) = (\R^n)^*$. To see this, we remark $\tilde{g}$ is diffeomorphism to its image because $g$ is diffeomorphism to its image. As a result, $\im(\tilde{g})$ is an n-dimensional manifold in $(\R^n)^*$. In particular, $\im (\tilde{g})$  is open. Suppose $\im(\tilde{g})$ have some boundary points, then that is corresponding to the boundary of $\Delta$ since $\im(\tilde{g})$ is homeomorphic to $\Delta$. However definition of $\im(\tilde{g})$ shows that the boundary of $\Delta$ is mapped on to infinity. Therefore the boundary of $\im(\tilde{g})$ is empty, that is, $\tilde{g}$ is surjective.
\end{proof}

\begin{remark}
This proposition is the explicit version of K\"{a}hler cuts in \cite{BGL}.
\end{remark}

\noindent
\textit{Proof of Theorem \ref{spsp}} We compute the K\"{a}hler structure $(\omega_\Delta,J_\Delta,g_\Delta)$ of $M_\Delta$ on $M_0$ by applying Proposition \ref{mainp}. First of all, note that we have already get action-angle coordinates $(x,\theta)$ on $M_0$ by the symplectic cutting construction in Remark \ref{abcdcba} sense. Hence we describe the complex structure $J_\Delta$ and the metric $g_\Delta$ in action-angle coordinates, i.e, we consider a tangent space of $M_0=\R$-$\rm{span}\{\frac{\partial}{\partial x_1},\cdots, \frac{\partial}{\partial x_n},\frac{\partial}{\partial \theta_1},\cdots,\frac{\partial}{\partial \theta_n}\} \cong$ a tangent space of $T^*T^n$. Now we note that $\omega$, $J_{ST}$ and the derivation of the biholomorphism $g$ is represented by the following matrices:\\
we set
\begin{equation*}
\eta_1=
\begin{pmatrix}
\eta_1^1\\
\cdot\\
\cdot\\
\cdot\\
\eta_1^n
\end{pmatrix}
,\cdots,\eta_N=
\begin{pmatrix}
\eta_N^1\\
\cdot\\
\cdot\\
\cdot\\
\eta_N^n
\end{pmatrix}
,l_1(x):= \kappa_1 - \langle x,\eta_1 \rangle,\cdots,l_N(x):= \kappa_N - \langle x,\eta_N \rangle, l_\infty(x):= \sum{l_i(x)},
\end{equation*}
and
\begin{equation*}
G=
\begin{pmatrix}
1+ \frac{1}{2}\sum\frac{(\eta_i^1)^2}{l_i(x)} & \frac{1}{2}\sum_i\frac{\eta_i^1\eta_i^2}{l_i(x)} & \cdots & \frac{1}{2}\sum_i\frac{\eta_i^1\eta_i^n}{\l_i(x)}\\
\cdot\\
\cdot\\
\cdot\\
\frac{1}{2}\sum\frac{\eta_i^1\eta_i^n}{l_i(x)} & \cdots & \frac{1}{2}\sum_i\frac{\eta_i^{n-1}\eta_i^n}{l_i(x)} & 1+ \frac{1}{2}\sum_i\frac{(\eta_i^n)^2}{l_i(x)}
\end{pmatrix}
.
\end{equation*}
Then,
\begin{equation}
\omega=
\begin{pmatrix}
O & I\\
-I & O
\end{pmatrix}
,J_{ST}=
\begin{pmatrix}
O & -I\\
I & O
\end{pmatrix}
,g_*=
\begin{pmatrix}
G & O\\
O & I
\end{pmatrix}
.
\end{equation}

Then, since $\omega_\Delta = \omega$, $J_\Delta = g_*\inv\circ J_{ST}\circ g_*$ and $g_\Delta(\cdot,\cdot)=\omega_\Delta(\cdot,J_\Delta\cdot)$, they are represented by the following matrices:
\begin{equation}\label{f}
\omega_\Delta=
\begin{pmatrix}
O & I\\
-I & O
\end{pmatrix}
,J_\Delta=
\begin{pmatrix}
O & -G\inv\\
G & O
\end{pmatrix}
,g_\Delta=\omega_\Delta(\cdot,J_\Delta \cdot)=
\begin{pmatrix}
G & O\\
O & G\inv
\end{pmatrix}
.
\end{equation}

Now we wish to take a function, denoted by $\mathrm{Sp}$, whose Hessian matrix provides $G$. By a direct calculation one can show that this function $\rm{Sp}$ $:\mathring{\Delta}\to\R$ is given by the following formula:
\begin{equation}
\Sp(x):=\frac{1}{2}\|x\|^2+\frac{1}{2}\sum{l_i(x)\log l_i(x) - \frac{1}{2}l_\infty(x)}.
\end{equation}
We call this function the \textit{symplectic potential} of the canonical K\"{a}hler structure. This completes the proof of Theorem \ref{spsp}.
\begin{remark}
A symplectic potential is introduced by V.Guillemin $($\cite{G}$)$ as the Legendre transformation of a K\"{a}hler potential. In our situation, the function $\rm{Sp}$ is determined by the construction rather than the Legendre transformation, but, nevertheless we can call it the symplectic potential for similarity of between representation (\ref{f}) and usual representation of the K\"{a}hler structure on action-angle coordinates $($c.f.\cite{A}$, p.7)$.
\end{remark}

\begin{remark}
If we drop $l_\infty (x)$ from $\Sp(x)$, it define the same K\"{a}hler structure as $(\ref{f})$, but its derivation is not equal to $\tilde{g}$. Similarly there are other deformations of $\Sp$ whose Hessian is non-degenerate. Specifically if $\Delta$ is polytope, then $\Sp(x) - \frac{1}{2}\|x\|^2$ coincide with the well known Guillemin's symplectic potential. On the other hand, if $\Delta$ is weakly convex, then we can not drop $\frac{1}{2}\|x\|^2$ from $\Sp(x)$ because if we drop it, the Hessian is degenerate, that is, it does not define K\"{a}hler structure.
\end{remark}

\begin{remark}
If a unimodular set $\R^n\supset\Delta$ is the n-simplex then the fixed point of $\tilde{g}$ coincides with the barycenter.
\end{remark}

In one-time cutting cases, we can compute $G\inv$ explicitly:
\begin{equation}
G\inv=\frac{1}{1+\frac{\|\eta\|^2}{\kappa-\langle x, \eta\rangle}}
\begin{pmatrix}
1+ \frac{1}{2}\frac{\sum_{i\neq1}(\eta^i)^2}{\kappa-\langle x, \eta\rangle} & -\frac{1}{2}\frac{\eta^1\eta^2}{\kappa-\langle x, \eta\rangle} & \cdots & -\frac{1}{2}\frac{\eta^1\eta^n}{\kappa-\langle x, \eta\rangle}\\
\cdot\\
\cdot\\
\cdot\\
-\frac{1}{2}\frac{\eta^1\eta^n}{\kappa-\langle x, \eta\rangle} & \cdots & -\frac{1}{2}\frac{\eta^{n-1}\eta^n}{\kappa-\langle x, \eta\rangle} & 1+ \frac{1}{2}\frac{\sum_{i\neq n}(\eta^i)^2}{\kappa-\langle x, \eta\rangle}
\end{pmatrix}
.
\end{equation}


\end{document}